\documentclass[a4paper,12pt]{article}
\usepackage[top=2.5cm,bottom=2.5cm,left=2.5cm,right=2.5cm]{geometry}
\usepackage{cite, amsmath, amssymb}
\pagestyle{empty}

\usepackage[english]{babel}
\usepackage{amsthm}

\usepackage[small,bf,hang]{caption} % Improved caption

\usepackage{graphicx}
\usepackage{subfig}                      % Figures which consist of multiple subfigures

\usepackage{hyperref}

\usepackage{color}

\newtheorem{theorem}{Theorem}[section]
\theoremstyle{plain}

\newtheorem{lemma}[theorem]{Lemma}

\theoremstyle{definition}

\graphicspath{{figures/}}

\begin{document}

%--------------------------------------------------------------------------------------------------

\title{\textbf{Fullerenes with distant pentagons}}

\author{\bigskip\textbf{Jan Goedgebeur$^a$, Brendan D. McKay$^b$}\\
$^a$\textit{Department of Applied Mathematics, Computer Science \& Statistics}\\
\textit{Ghent University}\\
\textit{Krijgslaan 281-S9, 9000 Ghent, Belgium}\\
\medskip
\texttt{jan.goedgebeur@ugent.be}\\
$^b$\textit{Research School of Computer Science}\\  
\textit{Australian National University}\\  
\textit{ACT 2601, Australia}\\
\texttt{bdm@cs.anu.edu.au}
}

\date{} % delete this line to display the current date
\maketitle

\vspace*{-10mm}

\begin{center}
(Received \today)
\end{center}

\begin{abstract}
%We determine the smallest fullerenes with a given \jgnew{minimum} distance $d$ between any two pentagons. 
%\jg{Is it clear from this that we prove this in general for any value of $d$, not just for some specific values? \bdm{Yes.}}
%Moreover we show that the minimum fullerenes for each $d$ are unique up to mirror image and that for each $d$ there is an $h_d$ such that fullerenes with minimum face-distance \jgnew{at least} $d$ and any number of hexagons greater than $h_d$ exist. 
For each $d>0$, we find all the smallest fullerenes for which the least distance between two pentagons is~$d$.  We also show that for each $d$ there is an $h_d$ such that fullerenes with pentagons at least distance $d$ apart and any number of hexagons greater than or equal to $h_d$ exist.
%OR \bdmnew{We also show that for each $d$ there is an $h_d$ such that for each $h \ge h_d$ there is a fullerene with $h$ hexagons and pentagons at least distance $d$ apart.}

We also determine the number of fullerenes where the minimum distance between any two pentagons is at least $d$, for $1 \le d \le 5$, up to 400 vertices.

\end{abstract}

\baselineskip=0.30in 

%--------------------------------------------------------------------------------------------------

\section{Introduction}
A \textit{fullerene}~\cite{kroto_85} is a cubic plane graph where all faces are pentagons or hexagons. Euler's formula implies that a
fullerene with $n$ vertices contains exactly 12 pentagons and $n/2 - 10$ hexagons.

The
\textit{dual} of a fullerene is the plane graph
obtained by exchanging the roles of vertices and faces: the vertex set of the dual graph
is the set of faces of the original graph and two 
vertices in the dual graph are adjacent if and only if the two faces share an edge
in the original graph. 
%The rotational order around the vertices in the embedding of the dual fullerene  follows the rotational order of the faces. 

The dual of a fullerene with $n$ vertices is a \textit{triangulation} (i.e.\ a plane graph where every face is a triangle) which contains 12 vertices with degree 5 and $n/2 - 10$ vertices with degree 6. The \textit{face-distance} between two pentagons is the distance between the corresponding vertices of degree 5 in the dual graph.

The first fullerene molecule (i.e.\ the $C_{60}$ ``buckyball'') was discovered in 1985 by Kroto et al.~\cite{kroto_85}. Among the fullerenes, the \textit{Isolated Pentagon Rule} (IPR) fullerenes are of special interest as they tend to be more stable~\cite{IPR_ref,IPR_ref2}. IPR  fullerenes  are  fullerenes  where  no  two  pentagons share an edge, i.e.\ they have minimum face-distance at least~2. Raghavachari~\cite{raghavachari1992ground} argued that steric strain will be minimized when the pentagons are distributed as uniformly as possible and therefore proposed the \textit{uniform curvature rule} as an extension of the IPR rule. Also, more recently Rodr\'iguez-Fortea et al.~\cite{rodriguez2010maximum} proposed the maximum pentagon separation rule where they argue that the most suitable carbon cages are those with the largest separations among the 12 pentagons.  These observations lead us to investigate the maximum separation between pentagons that can be achieved for a given number of atoms, or conversely how many atoms are needed to achieve a given separation.
We will refer to the least face-distance between pentagons of a fullerene as the \textit{pentagon separation} of the fullerene.

In the next section we determine the smallest fullerenes with a given pentagon separation. We also show that the minimum fullerenes for each $d$ are unique up to mirror image and that for each $d$ there is an $h_d$ such that fullerenes with pentagon separation at least $d$ and any number of hexagons greater than or equal to $h_d$ exist. The latter was already proven for $h_1$ (i.e., for all fullerenes) by Gr{\"u}nbaum and Motzkin in~\cite{grunbaum1963number} and for $h_2$ (i.e., for IPR fullerenes) by Klein and Liu in~\cite{klein1992theorems}.

Finally, we also determine the number of fullerenes of pentagon separation~$d$, for $1 \le d \le 5$, up to 400 vertices.

\section{Fullerenes with a given minimum pentagon separation}
\label{section:minnv_distant_pentagons}

In this section we determine the smallest fullerenes with a given pentagon separation.

We remind the reader of the icosahedral fullerenes~\cite{goldberg_37,coxeter_71}.
These fullerenes are uniquely determined by their Coxeter coordinates $(p,q)$ and are obtained by cutting an equilateral Goldberg triangle with coordinates $(p,q)$ from the hexagon lattice and gluing it to the faces of the icosahedron. As a Goldberg triangle with coordinates $(p,q)$ has $p^2 + pq + q^2$ vertices, an icosahedral fullerene with Coxeter coordinates $(p,q)$ has $20(p^2 + pq + q^2)$ vertices.  Also note that an icosahedral fullerene with Coxeter coordinates $(p,q)$ has pentagon separation~$p+q$.

The smallest fullerene for $d=1$ is of course unique: the icosahedron $C_{20}$.
For larger~$d$, the minimal fullerenes are given in the next theorem.

\begin{theorem} \label{theorem:min_face_distance_nv}
For odd $d\ge 3$, the smallest fullerenes with pentagon separation
at least $d$ are the icosahedral fullerenes with Coxeter coordinates $(\lceil d/2\rceil,\lfloor d/2\rfloor)$ and $(\lfloor d/2\rfloor,\lceil d/2\rceil)$.  These are mirror images and have $15d^2+5$ vertices.
For even $d$, the unique smallest fullerene with pentagon separation at least $d$ is the
the icosahedral fullerene with Coxeter coordinates $(d/2,d/2)$, which has $15d^2$ vertices.

\end{theorem}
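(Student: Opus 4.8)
The plan is to work in the dual picture, where a fullerene becomes a triangulation of the sphere with exactly twelve degree-$5$ vertices (the pentagons) and all other vertices of degree $6$, and the pentagon separation becomes the minimum graph distance between two degree-$5$ vertices. The key geometric fact I would establish first is a local lower bound: if a triangulation has all twelve degree-$5$ vertices pairwise at distance at least $d$, then around each such vertex the neighbourhood up to radius $\lfloor (d-1)/2 \rfloor$ looks exactly like the hexagonal lattice (no other defect can intrude), so each pentagon must ``own'' a disc of the hexagonal lattice of radius roughly $d/2$. Counting the vertices in these twelve discs, together with the curvature that the twelve pentagons must collectively supply, should force a lower bound on $n$ matching $15d^2+5$ or $15d^2$.

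The cleanest way to make the counting exact is via a discharging or Gauss--Bonnet style argument in the dual triangulation. Assign to each vertex a curvature $6-\deg$, so that by Euler's formula the total curvature is $72$, contributed as $+1$ by each of the twelve pentagons and $0$ by each hexagon. I would then argue that when pentagons are far apart, the ``spreading'' of this curvature forces the triangulation to contain twelve large lattice discs whose overlaps are controlled; since the icosahedral fullerene with Coxeter coordinates summing to $d$ realizes exactly the configuration where these discs tile the sphere with no waste, it must be extremal. Concretely, I expect to show that any triangulation achieving separation $\ge d$ with fewer than the claimed number of vertices would force two pentagons closer than $d$, giving the lower bound, and then exhibit the icosahedral fullerene (which earlier in the paper is noted to have separation exactly $p+q=d$ and $20(p^2+pq+q^2)$ vertices) as an explicit construction matching the bound.

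For the vertex count, I would simply evaluate $20(p^2+pq+q^2)$ at the claimed coordinates: for odd $d$ with $(p,q)=(\lceil d/2\rceil,\lfloor d/2\rfloor)=((d{+}1)/2,(d{-}1)/2)$ one checks $p^2+pq+q^2=(3d^2+1)/4$, giving $20\cdot(3d^2+1)/4=15d^2+5$; for even $d$ with $p=q=d/2$ one gets $p^2+pq+q^2=3d^2/4$, giving $15d^2$. The uniqueness claim then splits by parity: for even $d$ the symmetric choice $(d/2,d/2)$ is its own mirror image and is the only coordinate pair summing to $d$ with the minimal value of $p^2+pq+q^2$, whereas for odd $d$ the two coordinate orderings $(\lceil d/2\rceil,\lfloor d/2\rfloor)$ and its reverse are genuinely distinct mirror images, and I would verify they are the unique minimizers of $p^2+pq+q^2$ subject to $p+q=d$.

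The main obstacle will be the lower bound, specifically proving rigorously that separation $\ge d$ forces enough vertices. The delicate point is that the twelve hexagonal-lattice discs around the pentagons can overlap, so a naive sum of disc sizes overcounts; I would need to show that in any sphere triangulation the only way to pack twelve degree-$5$ defects at mutual distance $\ge d$ without exceeding the icosahedral vertex count is the perfectly symmetric icosahedral arrangement, and that any asymmetry either increases the vertex count or reduces some pairwise distance below $d$. Ruling out all non-icosahedral near-optimal configurations — and thereby also pinning down uniqueness up to mirror image — is where the real combinatorial-geometric work lies, and I would expect to lean on a careful analysis of how the twelve $+1$ curvature sources can be distributed on the sphere while keeping them far apart.
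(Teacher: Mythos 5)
Your counting strategy is sound for odd $d$ --- indeed it is essentially the paper's own argument transposed to the dual: faces of the fullerene at face-distance at most $\lfloor d/2\rfloor$ from a pentagon are your dual-lattice discs of radius $(d-1)/2$, twelve pairwise disjoint such discs account for $12+\tfrac{15(d^2-1)}{2}$ of the $n/2+2$ faces, and that gives exactly $n\ge 15d^2+5$. But for even $d$ your plan has a genuine gap. Disjointness only permits discs of radius $\lfloor(d-1)/2\rfloor=(d-2)/2$, and twelve disjoint discs of that radius give only $n\ge 15d^2-30d+20$, short of $15d^2$ by $30d-20$; enlarging the radius to $d/2$ destroys disjointness (pentagons at distance exactly $d$ have touching discs), and the shortfall is of the same order as the disc boundaries, so you would have to control exactly how many boundary vertices can be shared and by how many discs. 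The curvature bookkeeping you invoke does not do this (incidentally, $\sum(6-\deg)=12$ in the dual triangulation, not $72$). The paper escapes by changing what it counts in the even case: it counts vertices of the fullerene itself, using the quoted fact that the penta-hexagonal net has $5\lfloor k/2\rfloor+5$ vertices at distance $k$ from the pentagon; the balls of vertex-radius $d-2$ around the twelve pentagons are pairwise disjoint and each contains exactly $5d^2/4$ vertices, giving $n\ge 15d^2$ on the nose. Some such second idea is indispensable for even $d$.

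The uniqueness claim is the second gap: what you actually propose to verify (that the stated Coxeter coordinates minimize $p^2+pq+q^2$ subject to $p+q=d$) only distinguishes among icosahedral fullerenes, and you explicitly defer ``ruling out all non-icosahedral near-optimal configurations,'' which is the real content of the statement. The paper's mechanism for this is concrete and combinatorial, not a curvature-distribution analysis: since the counting bound is tight, a minimum fullerene must be a union of twelve patches with nothing to spare, so one can classify the gluings directly. For odd $d$, the patch boundary never contains three consecutive degree-$2$ vertices, hence no union of patches that is completable to a fullerene may have three consecutive degree-$3$ vertices on its boundary; every overlap of two patch boundaries except one (and its mirror image) creates such a triple, and from either admissible start every further attachment is forced, yielding exactly the two mirror-image icosahedral fullerenes. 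For even $d$, all twelve pentagons already lie inside the patches, so identifying dangling edges in any way other than the intended one creates a non-hexagonal face; again the completion is forced and unique. Without an argument of this type, both the sharpness of your lower bound and the uniqueness assertion remain unproved in your plan.
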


\begin{proof}
\

\noindent\textbf{Proof in the case that $d\ge 3$ is odd:}
\\
The \textit{penta-hexagonal net} is the regular tiling of the plane where a central pentagon is surrounded by an infinite number of hexagons.
The number of faces at face-distance $k$ from the pentagon in the penta-hexagonal net is $5k$. So the number of faces at face-distance at most $k$ from the pentagon in the penta-hexagonal net is $\sum\limits_{i=1}^k 5i + 1 = 5k(k+1)/2 + 1$.
Figure~\ref{fig:d=7patch} shows this situation for $k=3$.

\begin{figure}[h!t]
    \centering
    \includegraphics[width=0.32\textwidth]{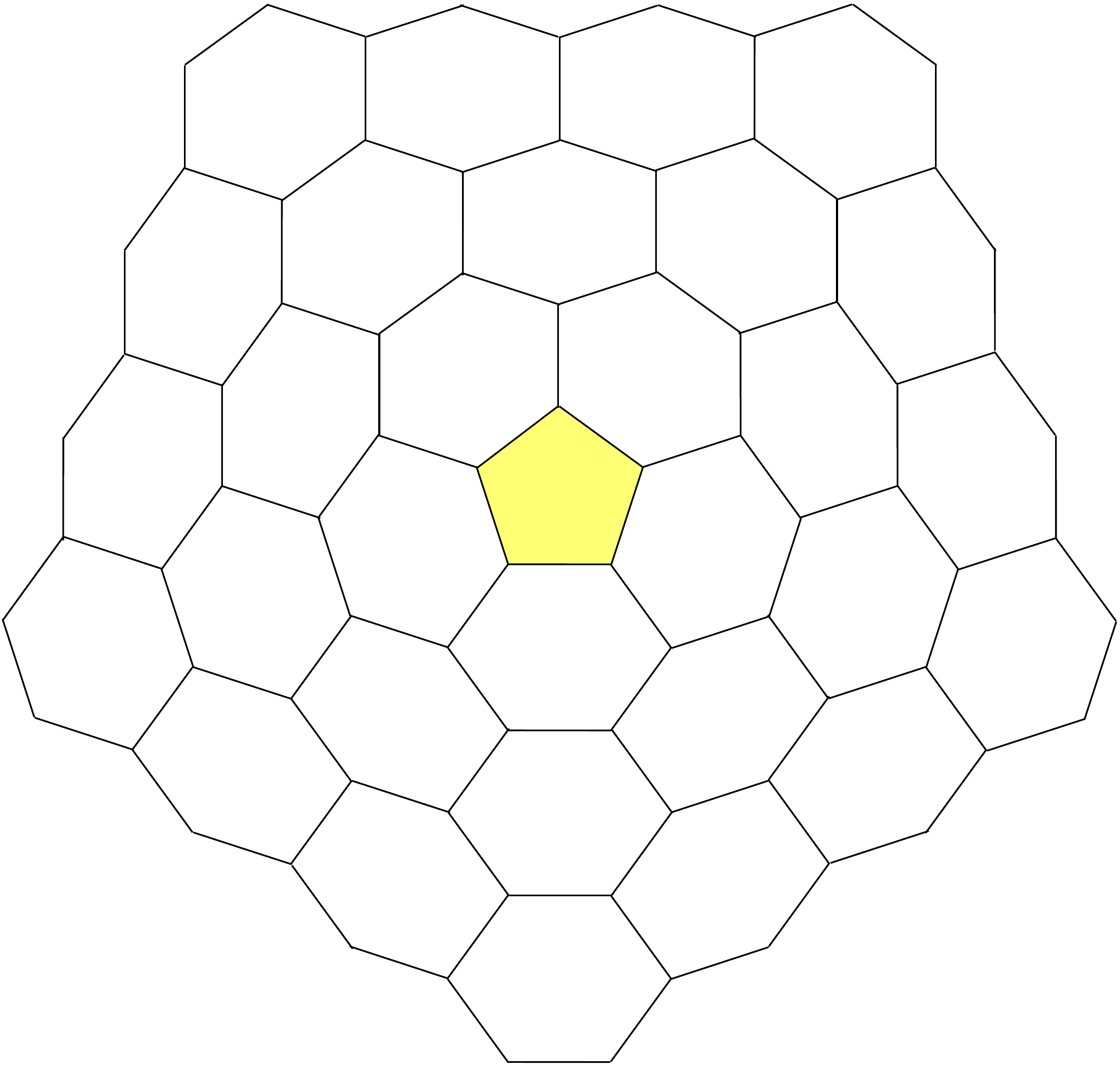}    
    \caption{Patch for $d=7$ in the proof of Theorem~\ref{theorem:min_face_distance_nv}}
    \label{fig:d=7patch}
\end{figure}

In a fullerene with pentagon separation at least~$d$, for odd~$d$, the sets of faces at face-distance at most $\lfloor d/2 \rfloor$ from each pentagon are pairwise disjoint.
Consequently the smallest such fullerenes we can hope to find consist of 12 copies of the above patch for $k=\lfloor d/2 \rfloor$, which comes to $15d^2+5$ vertices.

\begin{figure}[h!t]
    \centering
    \includegraphics[width=0.7\textwidth]{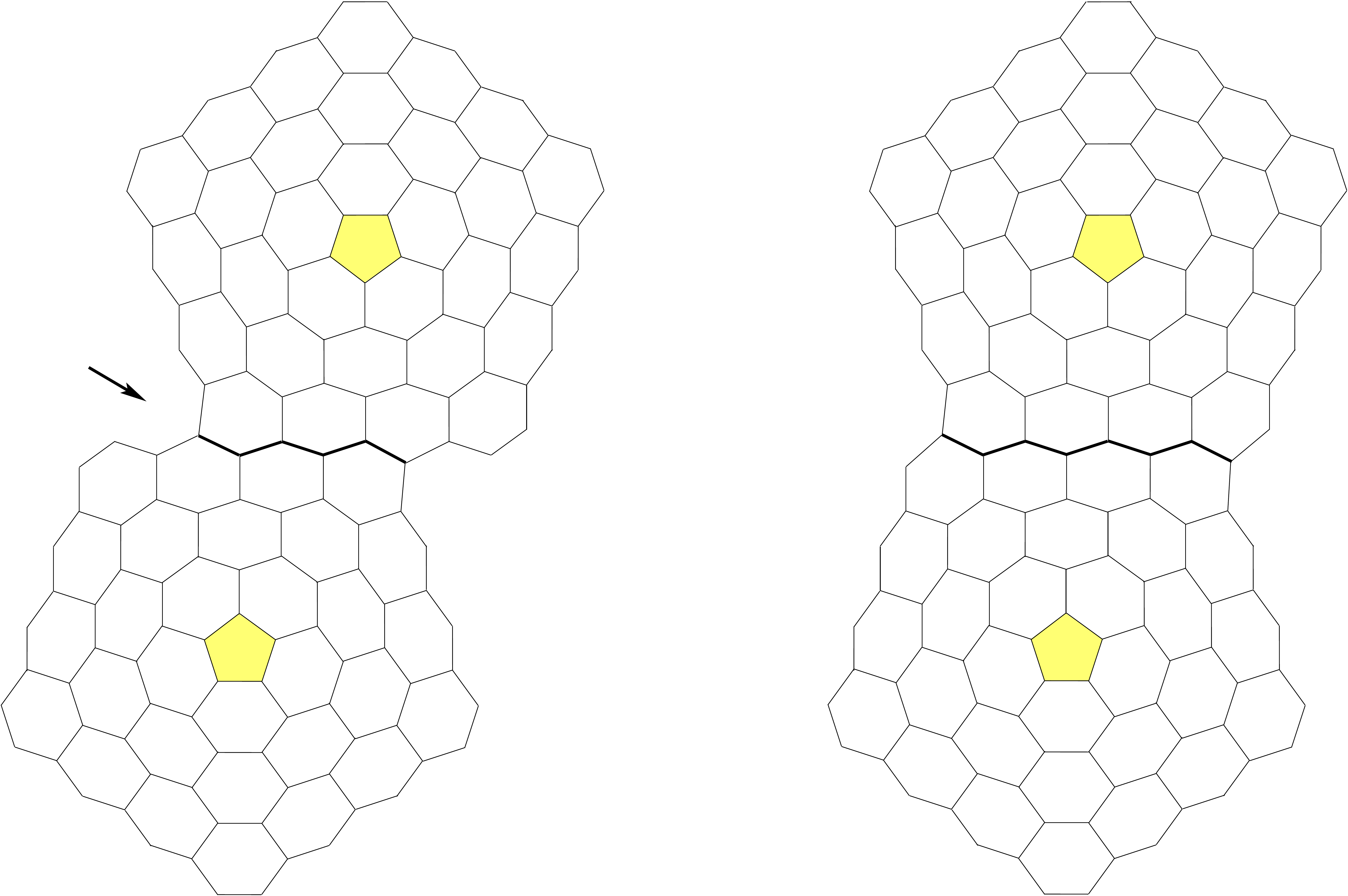}    
    \caption{Bad and good ways to join two patches for $d=7$}
    \label{fig:d=7patchjoin}
\end{figure}

Since the patch boundary has no more than two consecutive vertices of degree~2, it is impossible to join any number of them into a larger patch with a boundary having more than two consecutive vertices of degree~2.  Therefore, considering the complement, no union of these patches which is completable to a fullerene has more than two consecutive vertices of degree~3.  Now, every way to overlap the boundaries of two patches produces three consecutive vertices of degree~3, such as indicated in the left side of Figure~\ref{fig:d=7patchjoin}, except for the way shown in the right side of Figure~\ref{fig:d=7patchjoin} or its mirror image.  For each of these two starting points, there is only one way to attach a third patch to those two patches, and so on, leading to a unique completion in each case.
It is easy to see that these two fullerenes are the icosahedral fullerenes mentioned in the theorem.

\medskip
\begin{figure}[h!t]
    \centering
    \includegraphics[width=0.32\textwidth]{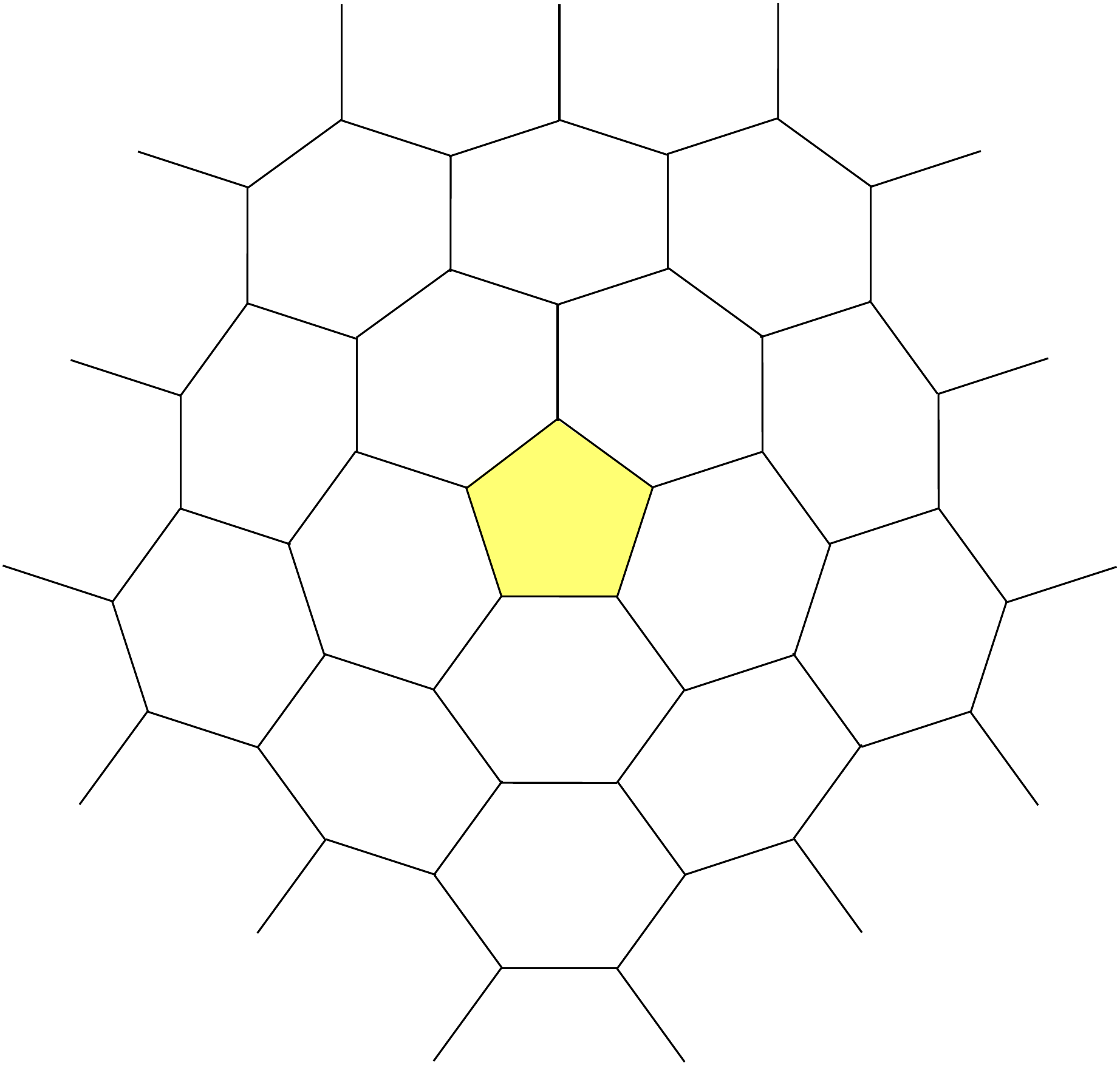}    
    \caption{Patch with dangling edges for $d=6$ in the proof of
              Theorem~\ref{theorem:min_face_distance_nv}}
    \label{fig:d=6patch}
\end{figure}

\noindent\textbf{Proof in the case that $d$ is even:}\\
The proof in this case is similar except that we use a different type of patch.
In~\cite{cvetkovic_02} it was proven that the number of vertices at distance $k$ from the pentagon in the penta-hexagonal net is $5 \lfloor k/2 \rfloor + 5$.  
So the total number of vertices at distance at most $k$ from the pentagon in the penta-hexagonal net is $\sum\limits_{i=0}^k (5 \lfloor i/2 \rfloor + 5) = 5 (\sum\limits_{i=0}^k \lfloor i/2 \rfloor + k + 1)$. If $k$ is even, $\sum\limits_{i=0}^k \lfloor i/2 \rfloor$ is equal to $k^2/4$. So the total number of vertices at distance at most $k$ from the pentagon in the penta-hexagonal net for even $k$ is $5(k^2/4 + k +1)$.

In a fullerene with pentagon separation at least $d$, for even $d$, the sets of vertices at distance at most $d-2$ from every pentagon are pairwise disjoint. 
The case of $d=6$ is shown in Figure~\ref{fig:d=6patch}, excluding the ends of the dangling edges.
Therefore, the smallest such fullerene of pentagon separation $d$ we can hope to construct consists of 12 of these patches for $k=d-2$, joined together by identifying dangling edges.  This would give us $15d^2$ vertices altogether.

\begin{figure}[h!t]
    \centering
    \includegraphics[width=0.7\textwidth]{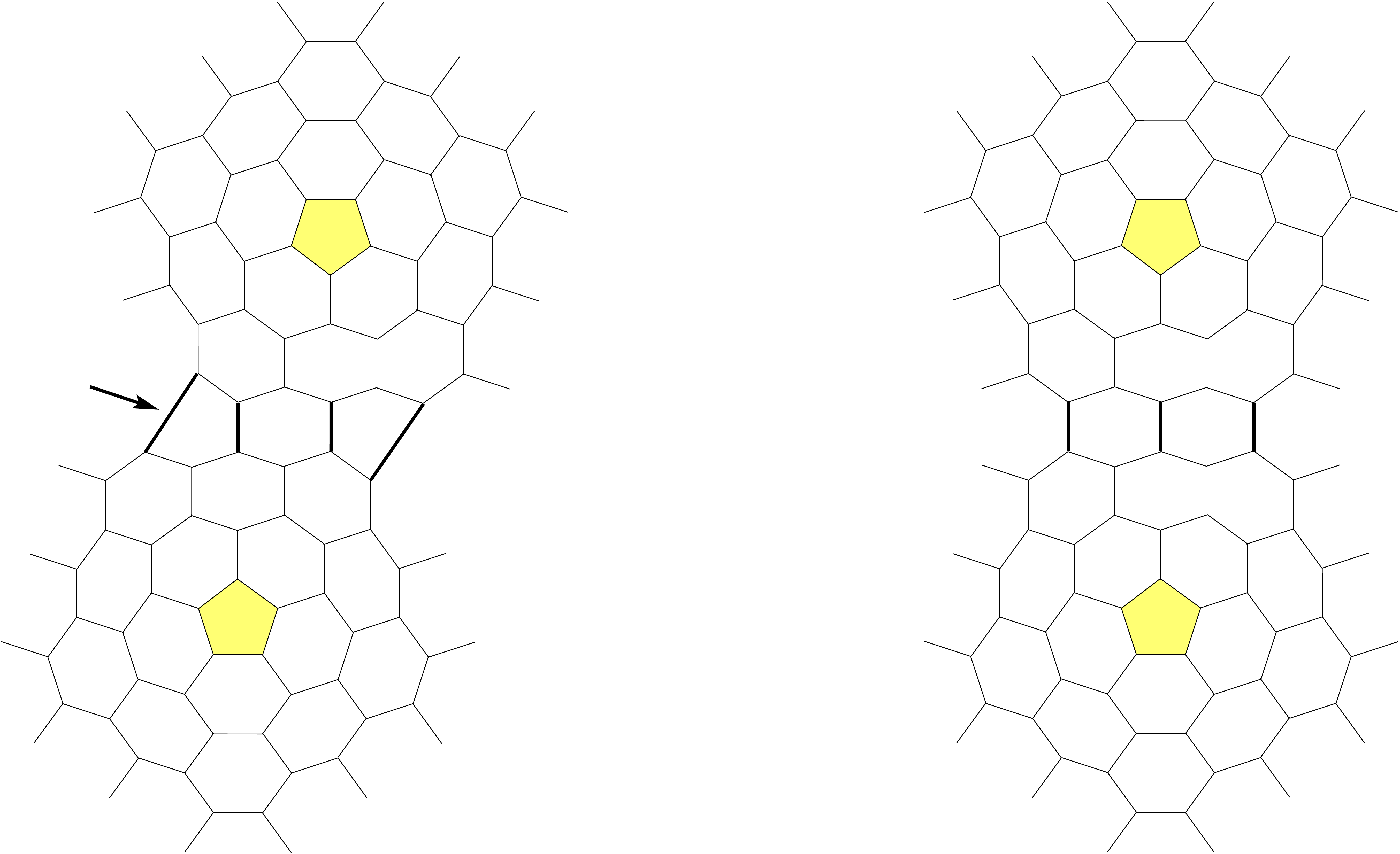}    
    \caption{Bad and good ways to identify dangling edges for $d=6$}
    \label{fig:d=6patchjoin}
\end{figure}

Since we are only permitted to create hexagons incident with the dangling
edges, dangling edges distance two apart in one patch can only be identified with
dangling edges distance two apart in another patch.  Otherwise, a face of the wrong
size is created, such as the pentagon indicated in the left side of Figure~\ref{fig:d=6patchjoin}.
This allows us to join two adjacent patches in only one way, as shown by the
right side of Figure~\ref{fig:d=6patchjoin}. Extra patches can then be attached in
unique fashion, leading to a single fullerene that is easily seen to be the
icosahedral fullerene with Coxeter coordinates $(d/2,d/2)$.
%
%In an icosahedral fullerene with Coxeter coordinates $(d/2,d/2)$ the minimum face-distance between any two pentagons is $d$ and it has $20(d^2/4 + d^2/4 + d^2/4) = 15d^2$ vertices. Thus the smallest fullerene where the minimum face-distance between any two pentagons is at least $d$, for even $d$, has $15d^2$ vertices.
%
%
%The uniqueness proof is similar to the case of odd~$d$ except that in this case we have to combine the patches by identifying dangling edges.  If done incorrectly, we get faces of the wrong size (such as the pentagon indicated by an arrow in the left side of Figure~\ref{fig:d=6patchjoin}).  The only successful way, as in the right side of Figure~\ref{fig:d=6patchjoin}, produces one fullerene which is the same as its mirror image.
\end{proof}

Next we will prove that for each $d$ there is an $h_d$ such that fullerenes with pentagon separation at least $d$ and any number of hexagons greater than or equal to $h_d$ exist. To prove this, we need Lemmas~\ref{lemma:boundarylength_plus1} and~\ref{lemma:add_hexagons_same_boundary}.

A \textit{fullerene patch} is a connected subgraph of a fullerene where all faces except one exterior face are also faces in the fullerene and all boundary vertices have degree 2 or 3 and all non-boundary vertices have degree 3. The \textit{boundary sequence} of a patch is the cyclic sequence of the degrees of the vertices in the boundary of a patch in clockwise
or counterclockwise order.

A \textit{cap}\index{cap} is a fullerene patch which contains 6 pentagons and has a boundary sequence of the form $(23)^l (32)^m$. Such a boundary is represented by the parameters $(l,m)$. In the literature, the vector $(l,m)$ is also called the \textit{chiral vector} (see~\cite{saito1998physical}). 

\begin{lemma} \label{lemma:boundarylength_plus1}
Any cap with parameters $(l,0)$ can be transformed into a cap with parameters $(l,1)$ without decreasing the minimum face-distance between the pentagons of the cap.
\end{lemma}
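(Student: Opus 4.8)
The plan is to argue constructively: starting from a cap $C$ with parameters $(l,0)$, I would wrap one additional layer of hexagons around its boundary, but introduce a single ``dislocation'' in this layer so that it spirals by one lattice step instead of closing up evenly. The boundary of an $(l,0)$ cap is the pure zigzag $(23)^l$, in which degree-$2$ and degree-$3$ vertices strictly alternate. Adding a complete ring of hexagons along such a boundary---the operation underlying Lemma~\ref{lemma:add_hexagons_same_boundary}---reproduces the zigzag and yields another $(l,0)$ cap. The whole idea of the present construction is that adding this ring with one extra hexagon inserted, i.e.\ with a single step in the second lattice direction, no longer reproduces the zigzag exactly but leaves a controlled defect.

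Next I would track the effect of the dislocation on the boundary sequence. Away from the defect the new boundary is again alternating; at the defect a direct reading of the local configuration shows that exactly one occurrence of two consecutive degree-$3$ vertices and exactly one occurrence of two consecutive degree-$2$ vertices are created, appearing adjacently as a single $3,3,2,2$ block, and that the numbers of the two vertex types remain equal. Since $(23)^l(32)^m$ is, for each $m\ge 1$, precisely an alternating sequence carrying one $3,3$ block and one $2,2$ block, the configuration $3,3,2,2$ corresponds to $m=1$, so the new boundary has the form $(23)^l(32)$ and the new cap has parameters $(l,1)$. Because the added layer consists only of hexagons and the six pentagons of $C$ are never touched, the result is again a cap.

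For the face-distance claim I would argue by retraction. The original cap $C$ is exactly the subpatch of the new cap $C'$ formed by the faces lying inside the added ring, so all six pentagons survive with their incidences and the dual graph of $C$ is an induced subgraph of the dual graph of $C'$. The added ring is a one-face-thick annulus of hexagons that retracts radially inward onto the old boundary faces; mapping each new hexagon to an adjacent old face dualises this into a $1$-Lipschitz retraction $r$ of the dual of $C'$ onto the dual of $C$ that fixes every old face. For any two pentagons $x,y$ a shortest path in $C'$ then maps under $r$ to a walk in $C$ from $x$ to $y$ of no greater length, so $d_C(x,y)\le d_{C'}(x,y)$; combined with the reverse inequality coming from the induced-subgraph inclusion, every pairwise face-distance is in fact preserved. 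In particular the minimum face-distance between pentagons does not decrease.

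The step I expect to be the main obstacle is the bookkeeping of the second paragraph: one must verify that a single lattice-step dislocation changes the parameters by exactly $(0,1)$---that it produces precisely one $3,3$ block and one $2,2$ block and no further irregularity---and that the spiralling layer nonetheless closes up into a legal fullerene patch whose every new face is a hexagon. This is cleanest to settle by drawing the local picture at the dislocation, in the spirit of the figures accompanying Theorem~\ref{theorem:min_face_distance_nv}, and reading off the boundary degrees directly; global closure then follows because away from the single defect the construction agrees with the even-ring addition, which is already known to close up.
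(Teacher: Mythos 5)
Your construction cannot exist: an $(l,0)$ cap can never be extended to an $(l,1)$ cap by adding hexagons only, with or without a ``dislocation''. The parameters $(l,m)$ of a cap are the chiral vector of the nanotube that the cap closes, and this vector is a holonomy invariant of the hexagonal part. Concretely, if $C \subseteq C'$ are caps such that every face of $C' \setminus C$ is a hexagon, then $C' \setminus C$ is an annulus all of whose faces are hexagons; such an annulus is flat, so developing it onto the hexagonal lattice assigns the same translation (Burgers) vector to its inner and outer boundary cycles, since both wrap the annulus exactly once. Hence the inner parameters and the outer parameters must agree as lattice vectors, but $(l,0) \neq (l,1)$. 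Equivalently: a dislocation in the hexagon lattice necessarily carries non-hexagonal faces at its core (a pentagon--heptagon pair, for instance), so the single defect in your spiral layer cannot itself be a hexagon, and the ``local picture'' you defer to in your final paragraph can never be drawn --- the obstruction is global, not a matter of local bookkeeping. Your third paragraph's retraction argument is fine as far as it goes, but it is applied to an object that does not exist.

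This impossibility is precisely why the paper's proof does something different: it \emph{moves a pentagon}. The paper first strips rings of hexagons until a pentagon lies on the boundary, then changes that boundary pentagon into a hexagon $h$ (the patch momentarily has only five pentagons, which is what permits the holonomy to change), adds a ring of hexagons, and finally changes a boundary hexagon adjacent to $h$ back into a pentagon, restoring six pentagons. The net effect is that one pentagon migrates outward by one face, and this displacement of curvature is what makes the parameter change from $(l,0)$ to $(l,1)$ possible; the accompanying figures then verify that the migration does not decrease any pentagon--pentagon face-distance. Note also that under the paper's construction the pairwise distances can genuinely change (the lemma only claims they do not decrease), whereas your argument aimed at exact preservation of all distances with the pentagons left fixed --- a further sign that the approach you chose is structurally incompatible with what the lemma requires.
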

\begin{proof}
Given a cap with parameters $(l,0)$. If the cap does not contain a pentagon in its boundary, we remove $(l,0)$ rings of hexagons until there is a pentagon in the boundary of the cap.

In Figure~\ref{fig:change_cap_bound} we show how the $(l,0)$ cap which contains a boundary pentagon (see Figure~\ref{fig:change_cap_bound_step1}) can be transformed into a cap with parameters $(l,1)$ without decreasing the minimum face-distance between the pentagons. This is done by changing the boundary pentagon into a hexagon $h$, adding a ring of hexagons (see Figure~\ref{fig:change_cap_bound_step2}) and changing a hexagon in the boundary which is adjacent to $h$ into a pentagon (see Figure~\ref{fig:change_cap_bound_step3}).
\end{proof}

\begin{figure}[h!t]
    \centering
    \subfloat[]{\label{fig:change_cap_bound_step1}\includegraphics[width=0.7\textwidth]{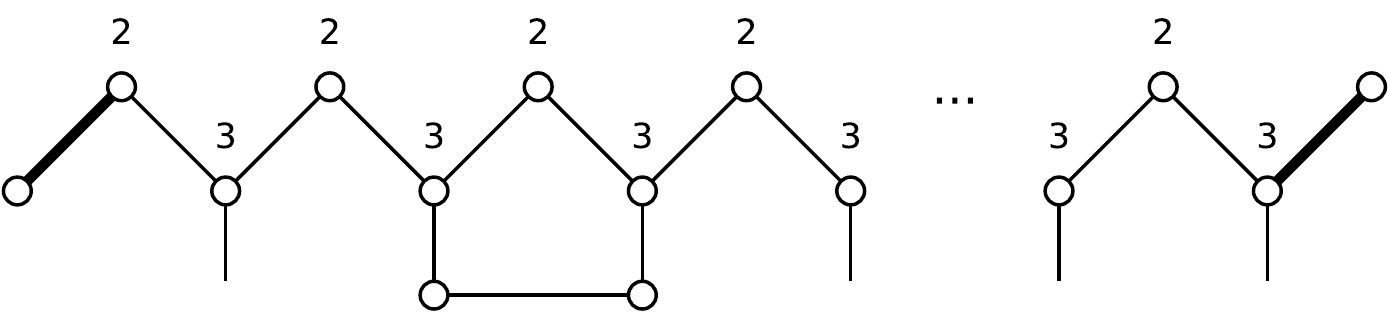}}\\ %\qquad
    \subfloat[]{\label{fig:change_cap_bound_step2}\includegraphics[width=0.7\textwidth]{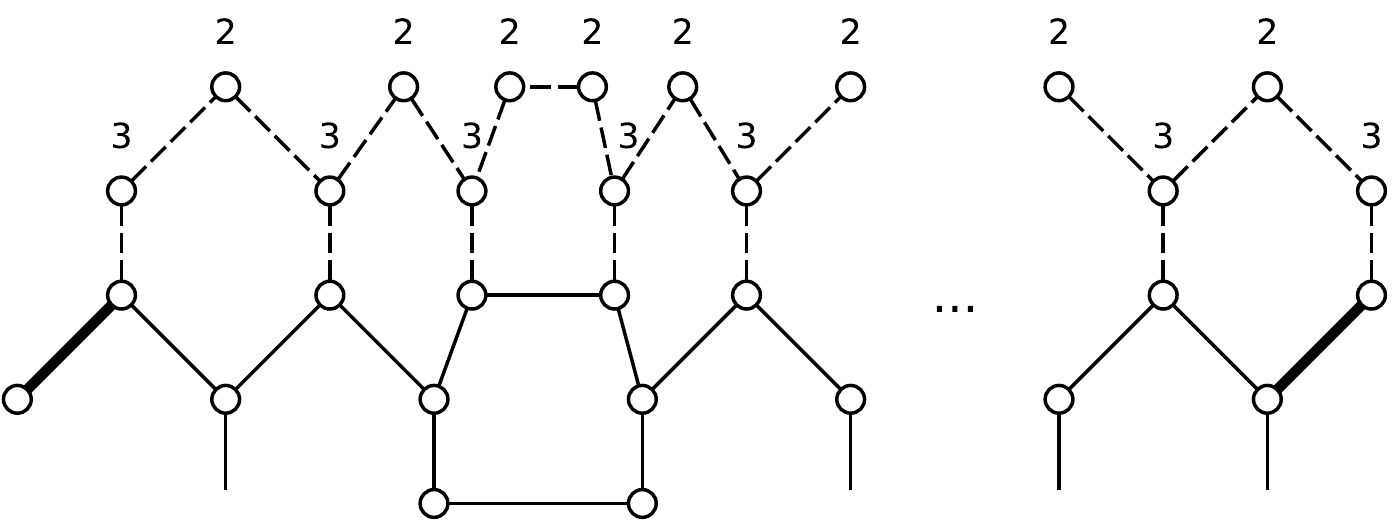}}\\
    \subfloat[]{\label{fig:change_cap_bound_step3}\includegraphics[width=0.7\textwidth]{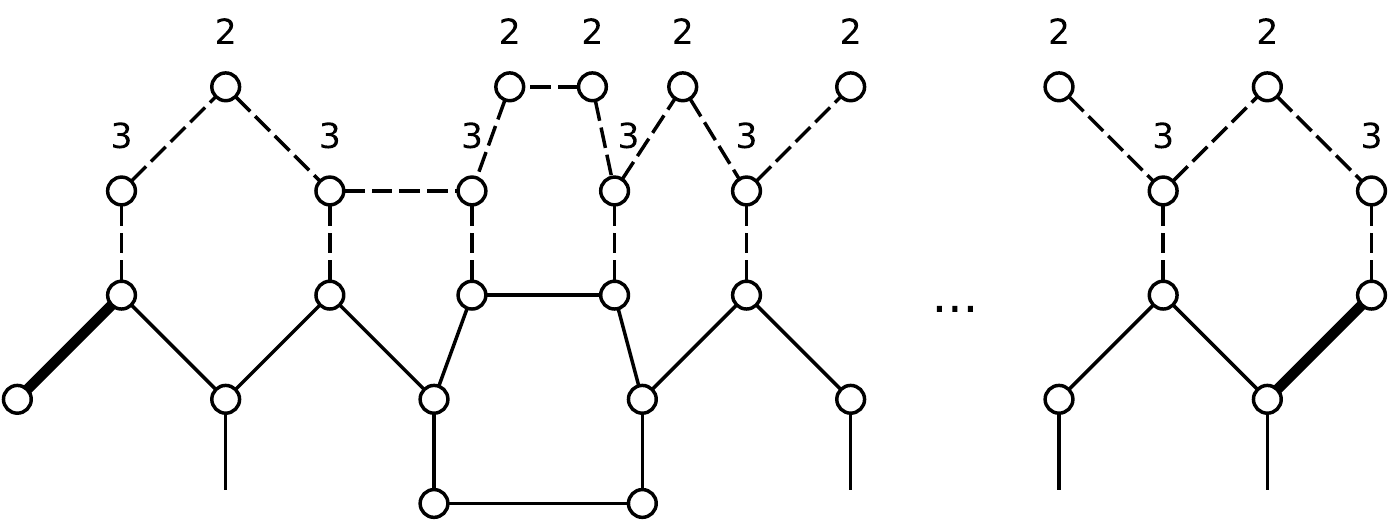}}
    \caption{Procedure to change a cap with parameters $(l,0)$ to a cap with parameters $(l,1)$. The bold edges in the figure have to be identified with each other.}
    \label{fig:change_cap_bound}
\end{figure}

\begin{lemma} \label{lemma:add_hexagons_same_boundary}
Given a cap $C$ with parameters $(l,m)$ with $l \neq 0$ and $m \neq 0$ and which consists of $f$ faces. A cap $C'$ with the same parameters $(l,m)$ which contains $C$ as a subgraph and has $f+l$, respectively $f+m$ faces can be constructed from $C$ by adding $l$ or $m$ hexagons to C, respectively.
\end{lemma}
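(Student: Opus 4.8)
The plan is to read the parameters $(l,m)$ literally as a chiral vector and to realise the two extensions as rigid translations of the boundary inside the flat hexagonal lattice. Just outside its boundary a cap is glued to a cylindrical piece of the ordinary hexagonal lattice (the body of the associated nanotube), so I would first fix the two primitive translations $\vec a_1,\vec a_2$ meeting at $60^\circ$ and identify the boundary with a circuit $\gamma$ in this lattice whose net displacement, after one trip around the cylinder, is the chiral vector $\vec C=l\,\vec a_1+m\,\vec a_2$. The point of this step is to verify that the cyclic sequence $(23)^l(32)^m$ is exactly the combinatorial record of such a circuit: the two straight zigzag runs $(23)^l$ and $(32)^m$ correspond to the $\vec a_1$- and $\vec a_2$-directions, and the two corners ``$33$'' and ``$22$'' encode where the circuit turns. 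Here the degree-$2$ boundary vertices are the sites where an attached hexagon supplies the missing third edge.

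To add $l$ hexagons I would translate $\gamma$ by $\vec a_2$; to add $m$ hexagons, by $\vec a_1$. Because translation by a lattice vector is a symmetry of the hexagonal lattice, the translate is a congruent circuit and therefore carries the \emph{same} boundary sequence $(23)^l(32)^m$, i.e.\ the same parameters $(l,m)$. I would choose the orientation of $\vec a_2$ (resp.\ $\vec a_1$) that points away from the cap, extending the cylinder; this is possible precisely because $\vec a_2\not\parallel\vec C$ when $l\neq 0$ (resp.\ $\vec a_1\not\parallel\vec C$ when $m\neq 0$), so the translate is displaced along the tube axis and is disjoint from $C$. The band lying between $\gamma$ and its translate is then an annular strip of hexagons, which can be glued onto $C$ one hexagon at a time in the manner suggested by the figures, each new hexagon raising two degree-$2$ boundary vertices to degree $3$ without creating any face of the wrong size.

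It remains to count the hexagons in the band and to conclude. Normalising so that the primitive cell of the honeycomb contains a single hexagonal face, the number of hexagons swept out when $\gamma$ is translated by $\vec v$ equals the area $|\vec C\times\vec v|$ measured in primitive cells. For $\vec v=\vec a_2$ this gives $|\vec C\times\vec a_2|=|l\,(\vec a_1\times\vec a_2)|=l$ hexagons, and for $\vec v=\vec a_1$ it gives $|\vec C\times\vec a_1|=m$ hexagons. Hence the enlarged patch $C'$ contains $C$, has the same parameters $(l,m)$, and has $f+l$ (respectively $f+m$) faces, being obtained from $C$ by adding $l$ (respectively $m$) hexagons, exactly as claimed.

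The main obstacle is making the ``unrolling'' rigorous rather than pictorial: I must justify that a collar of the boundary embeds in the flat lattice and, crucially, that the circuit closes up correctly at the seam where it returns to its starting point, so that the glued band produces the cyclic sequence $(23)^l(32)^m$ with no vertex of degree $\ge 4$ and no stray pentagon. The bookkeeping at the ``$33$'' and ``$22$'' corners, where the two straight runs are joined, is where the argument has to be checked most carefully. It is also worth remarking that the hypotheses $l\neq 0$ and $m\neq 0$ are used in an essential and symmetric way: a vanishing coordinate would make the corresponding primitive vector run around the circumference, so the associated translation would sweep out no hexagons and fail to extend the tube. This is exactly why the degenerate case is handled separately, by first passing from an $(l,0)$ cap to an $(l,1)$ cap via Lemma~\ref{lemma:boundarylength_plus1}.
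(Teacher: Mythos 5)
Your overall strategy --- unroll the hexagonal collar of the boundary into the planar lattice, realise the extension as a translation of the boundary circuit by a primitive vector, and count the added hexagons via the area $|\vec C\times\vec v|$ --- is sound, and it is essentially a formalisation of what the paper does: the paper's proof consists only of Figure~\ref{fig:add_hexagons_same_cap_step}, which depicts exactly this extension with the helical seam indicated by identified bold edges. Your hexagon counts ($l$ for translation by $\vec a_2$, $m$ for $\vec a_1$) are correct, as is the idea that lattice symmetry forces the new boundary to have the same sequence $(23)^l(32)^m$.

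However, one step as written is false, and it sits precisely where you said the argument ``has to be checked most carefully.'' The translate $\gamma+\vec a_2$ is \emph{not} disjoint from $\gamma$ (hence not from $C$), and the added region is \emph{not} an annular strip running around the whole tube. The $(32)^m$ run of the boundary is a zigzag segment in the $\vec a_2$-direction, so it is invariant (as an infinite path) under translation by $\vec a_2$; translating the finite run by $\vec a_2$ therefore slides it one unit along itself, and $\gamma$ and $\gamma+\vec a_2$ share a subpath of length $m-1$ units of that run (a single vertex when $m=1$, whole edges as soon as $m\ge 2$). Non-parallelism of $\vec a_2$ and $\vec C$ gives the circuit a nonzero axial displacement, but it cannot prevent this overlap, because $\vec a_2$ is parallel to one of the two straight runs. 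Consequently the region between the two circuits is pinched to zero width along the $(32)^m$ run: all $l$ new hexagons form a single row attached along the $(23)^l$ run only, with the end hexagons absorbing the bookkeeping at the ``$22$'' and ``$33$'' corners --- which is exactly the configuration in the paper's figure. The lemma still follows, because your area computation already gives $l$ (it never assumed a full annulus), and the boundary-sequence argument survives in the corrected form: the new boundary is the projection of $\tilde\gamma+\vec a_2$, and the hexagonal half-tube outside the cap is mapped into itself by the translation, so corresponding boundary vertices have equal degrees. But to be a proof, your text must replace the ``disjoint translate, glue an annulus hexagon by hexagon'' picture by this ``spiral row of $l$ hexagons along one run'' picture and verify the two corner hexagons explicitly; as written, the gluing you describe (hexagons added all around the boundary) does not exist.
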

\begin{proof}
Given a cap $C$ with parameters $(l,m)$ with $l \neq 0$ and $m \neq 0$. In Figure~\ref{fig:add_hexagons_same_cap_step} we show how a cap $C'$ with the same parameters $(l,m)$ which contains $C$ as a subgraph and has $f+l$ faces can be constructed from $C$ by adding $l$ hexagons to C. 

A cap $C''$ with $f+m$ faces can be obtained in a completely analogous way by adding $m$ hexagons to $C$.
\end{proof}

\begin{figure}[h!t]
    \centering
    \subfloat[]{\label{fig:add_hexagons_same_cap_step1}\includegraphics[width=0.95\textwidth]{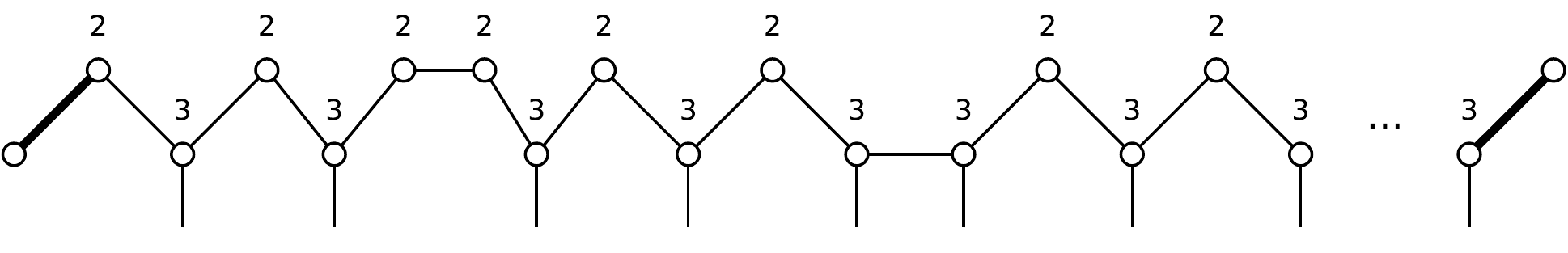}}\\ %\qquad
    \subfloat[]{\label{fig:add_hexagons_same_cap_step2}\includegraphics[width=0.95\textwidth]{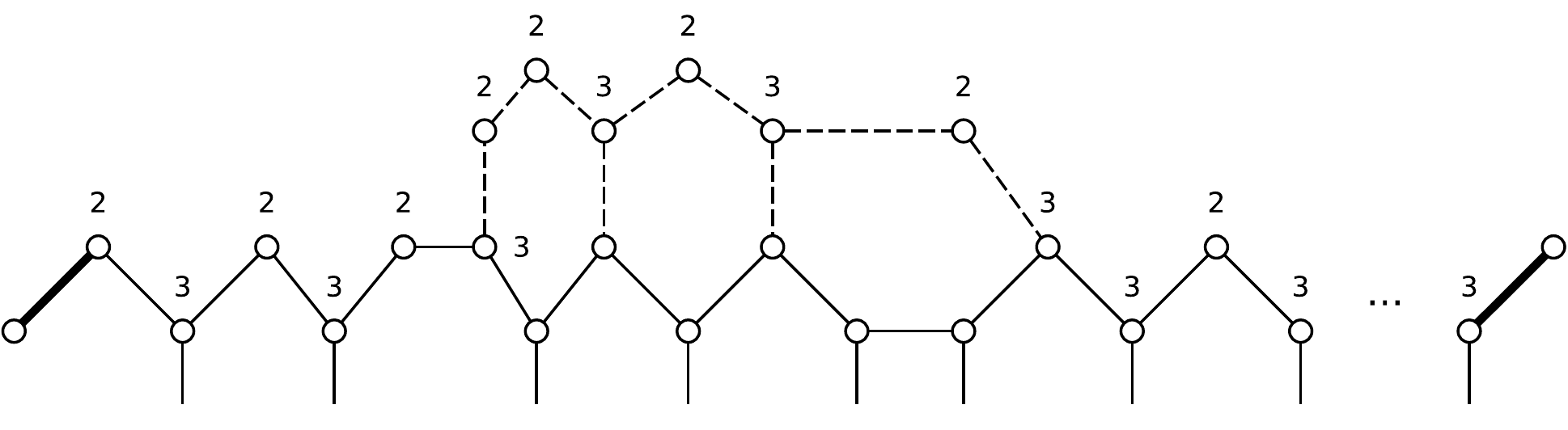}}\\
    \caption{Procedure which adds $l$ hexagons to an $(l,m)$ cap without changing the boundary parameters. The bold edges in the figure have to be identified with each other.}
    \label{fig:add_hexagons_same_cap_step}
\end{figure}

\begin{theorem} \label{theorem:min_face_distance_existence}
For each $d$ there is an $h_d$ such that fullerenes with pentagon separation at least $d$ and any number of hexagons greater than or equal to $h_d$ exist.
\end{theorem}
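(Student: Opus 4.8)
The plan is to build a single base fullerene of pentagon separation at least $d$ that is assembled from two caps, and then to use Lemma~\ref{lemma:add_hexagons_same_boundary} to increase its hexagon count one at a time. The enabling observation is purely combinatorial: interchanging the degrees $2$ and $3$ in a boundary sequence $(23)^l(32)^m$ yields, up to a cyclic shift, the sequence $(23)^m(32)^l$, which is the boundary of the mirror image of an $(l,m)$ cap. Thus an $(l,m)$ cap and its mirror image have complementary boundaries and may be glued along them into a cubic plane graph whose faces are all pentagons or hexagons; as each cap carries $6$ pentagons, the glued object has exactly $12$ pentagons and is a fullerene.

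First I would manufacture a cap $C$ with a unit boundary parameter whose six pentagons are pairwise at face-distance at least $d$. Take an icosahedral fullerene of pentagon separation at least $d$ (these exist by the remarks preceding Theorem~\ref{theorem:min_face_distance_nv}) and cut it along an equatorial cycle into two congruent caps, each holding $6$ of the $12$ pentagons; the boundary of such a cut is of the form $(l,0)$. Since any face path inside a cap is also a face path of the whole fullerene, face-distances inside a cap are at least the corresponding distances in the fullerene, so the six pentagons of each cap are pairwise at least $d$ apart. Now apply Lemma~\ref{lemma:boundarylength_plus1} to convert this $(l,0)$ cap into an $(l,1)$ cap $C$; by that lemma the internal pentagon separation does not decrease, so $C$ still has internal separation at least $d$.

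Gluing $C$ to its mirror image $\bar C$ (boundaries $(l,1)$ and $(1,l)$, which are complementary by the first paragraph) produces a base fullerene $F_0$. Its internal separations are at least $d$ in each half, and for a pentagon $P$ in $C$ and a pentagon $Q$ in $\bar C$ every face path from $P$ to $Q$ must cross the gluing boundary, so its length is at least the sum of the distances of $P$ and $Q$ to that boundary. Because the operation of Lemma~\ref{lemma:boundarylength_plus1} only inserts hexagons between the pentagons and the boundary, those distances are no smaller than in the original icosahedral fullerene; hence the across-the-boundary separation is also at least $d$, and $F_0$ has pentagon separation at least $d$. (If one prefers to secure the cross separation directly, one may first enlarge $C$ by adding rings of $l$ hexagons via the $l$-variant of Lemma~\ref{lemma:add_hexagons_same_boundary} until every pentagon of $C$ lies at distance at least $\lceil d/2\rceil$ from the boundary.) Let $h_d$ denote the number of hexagons of $F_0$.

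Finally I would inflate the hexagon count in unit steps. As $C$ has boundary parameter $m=1$, the $m$-variant of Lemma~\ref{lemma:add_hexagons_same_boundary} adds exactly one hexagon to $C$ while keeping both the boundary $(l,1)$ and $C$ as a subgraph; the boundary is unchanged, so the enlarged cap still glues to $\bar C$, and since the extra hexagon sits against the boundary it decreases neither the internal nor the across-the-boundary separation. Performing this step $t$ times yields, for every integer $t\ge 0$, a fullerene of pentagon separation at least $d$ with exactly $h_d+t$ hexagons, which proves the theorem. I expect the main obstacle to be the construction of the base fullerene rather than the increment: one must confirm that the equatorial cut of the icosahedral fullerene does yield an $(l,0)$ cap on six pentagons, and that both cap operations genuinely insert their new hexagons between the pentagons and the boundary, so that the across-the-boundary separation, once achieved, is never subsequently destroyed. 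Once a cap with a unit boundary parameter is available, the single-hexagon increment is immediate from Lemma~\ref{lemma:add_hexagons_same_boundary}.
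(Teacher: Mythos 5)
Your overall strategy coincides with the paper's: obtain a cap with boundary parameters $(l,1)$ whose six pentagons are pairwise at face-distance at least $d$, glue two caps into a base fullerene, and then use the $m$-variant ($m=1$) of Lemma~\ref{lemma:add_hexagons_same_boundary} to add one hexagon at a time. But two of your steps have genuine gaps. First, your ``enabling observation'' about gluing is false. When two caps are glued along a common cycle, that cycle is traversed in opposite senses by the two caps, so the matching condition is not ``complementary boundary sequences'' but ``complementary after reversal''. Since the reverse of the complement of $(23)^l(32)^m$ is again $(23)^l(32)^m$ up to rotation, a cap glues with a cap having the \emph{same} parameters $(l,m)$ --- which is exactly what the paper does, gluing two copies of the $(9\lceil d/2\rceil,1)$ cap --- and it cannot be glued to its mirror image, an $(m,l)$ cap, unless $l=m$ or $lm=0$: as oriented cyclic sequences, $(23)^l(32)^1$ and $(23)^1(32)^l$ are different for $l\neq 1$. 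With $m=1$ and $l$ large, the fullerene $F_0$ you describe therefore does not exist as stated. The repair is easy (glue two copies of $C$, both here and in the incrementing step), but as written the construction fails.

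Second, and more seriously, your base cap is never actually produced. You assert that an icosahedral fullerene of separation at least $d$ can be ``cut along an equatorial cycle'' into two caps with boundary $(l,0)$, and you yourself flag this as the main obstacle --- but nothing in your proposal supplies it, and it is precisely the hard step. The paper does not claim such a clean cut exists: it invokes a result of Brinkmann and Schein that the icosahedral fullerene with Coxeter coordinates $(p,q)$ contains a $6$-pentagon patch which is only a \emph{subgraph} of a cap with parameters $(3(p+2q),3(p-q))$, and then cites further results to show that such a patch can be completed to an actual $(9\lceil d/2\rceil,0)$ cap by adding hexagons, after which Lemma~\ref{lemma:boundarylength_plus1} yields the $(9\lceil d/2\rceil,1)$ cap. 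The distinction matters: the half of a fullerene cut off by a cycle need not have a boundary of the alternating form $(23)^l$, which is why the literature works with patches completable to caps rather than with cuts. Without this ingredient your construction never starts; with it, and with the gluing fix above, your argument becomes essentially the paper's proof. A smaller point: your primary argument for the across-the-seam separation is unsound (in the original fullerene two pentagons may both lie close to the cut cycle, hence have small summed distance to it, while being far apart along it), but your fallback of inserting rings of hexagons so that every pentagon is at distance at least $\lceil d/2\rceil$ from the boundary is fine and matches the paper's ``adding rings of hexagons if necessary''.
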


\begin{proof}
Given an icosahedral fullerene  $F$ with Coxeter coordinates $(\lceil d/2 \rceil, \lceil d/2 \rceil)$. In this fullerene the minimum face-distance between the pentagons is $2\lceil d/2 \rceil$.

Brinkmann and Schein~\cite{brinkmann_schein} have proven that every icosahedral fullerene with Coxeter coordinates $(p,q)$ contains a fullerene patch with 6 pentagons which is a subgraph of a cap with parameters $(3(p+2q),3(p-q))$. So $F$ contains a fullerene patch with 6 pentagons which is a subgraph of a cap with parameters $(9\lceil d/2 \rceil, 0)$.

It follows from~\cite{saito1998physical, ficon_04} that such a fullerene patch can be completed to a cap with parameters $(9\lceil d/2 \rceil, 0)$ by adding hexagons. It follows from Lemma~\ref{lemma:boundarylength_plus1} that this cap can then be transformed to a cap with parameters $(9\lceil d/2 \rceil, 1)$ without decreasing the minimum face-distance between the pentagons of the cap.

We form a fullerene $F'$ with pentagon separation at least $d$ by gluing together two copies of the $(9\lceil d/2 \rceil, 1)$ cap and adding $(9\lceil d/2 \rceil, 1)$ rings of hexagons if necessary. Let $h_{F'}$ denote the number of hexagons of $F'$. Now a fullerene with pentagon separation at least $d$ and any number of hexagons greater than $h_{F'}$ can be obtained by recursively applying Lemma~\ref{lemma:add_hexagons_same_boundary} to $F'$.
\end{proof}

The counts of the number of fullerenes up to 400 vertices with pentagon separation at least $d$, for $1 \le d \le 5$, can be found in Tables~\ref{table:fuller_counts_1}-\ref{table:fuller_counts_4}. (Note that $d=1$ gives the set of all fullerenes and $d=2$ gives the set of all IPR fullerenes). These counts were obtained by using the program \textit{buckygen}~\cite{fuller-paper, fuller-paper-ipr} (which can be downloaded from \url{http://caagt.ugent.be/buckygen/})  to generate all non-isomorphic IPR fullerenes and then applying a separate program to compute their pentagon separation.  Note that fullerenes which are mirror images of each other are considered to be in the same isomorphism class and are thus only counted once.

Some of the fullerenes from Tables~\ref{table:fuller_counts_1}-\ref{table:fuller_counts_4} can also be downloaded from the \textit{House of Graphs}~\cite{hog} at \url{http://hog.grinvin.org/Fullerenes}~.
Figures \ref{fig:smallest_d=3}-\ref{fig:smallest_d=5} show the smallest fullerenes with pentagon separation $d$, for $3 \le d \le 5$.

%\bdm{Don't need to say this, as the image captions say it: \jgnew{These are the icosahedral fullerenes with Coxeter coordinates $(\lceil d/2\rceil,\lfloor d/2\rfloor)$ (and their mirror images in the case that $d$ is odd).}}

%\jg{I think it's not necessary to include pictures of $C_{20}$ and $C_{60}$ as everybody knows these fullerenes by heart or do you think it's better to include them anyway for completeness?}

\begin{figure}[h!t]
    \centering
    \includegraphics[width=0.5\textwidth]{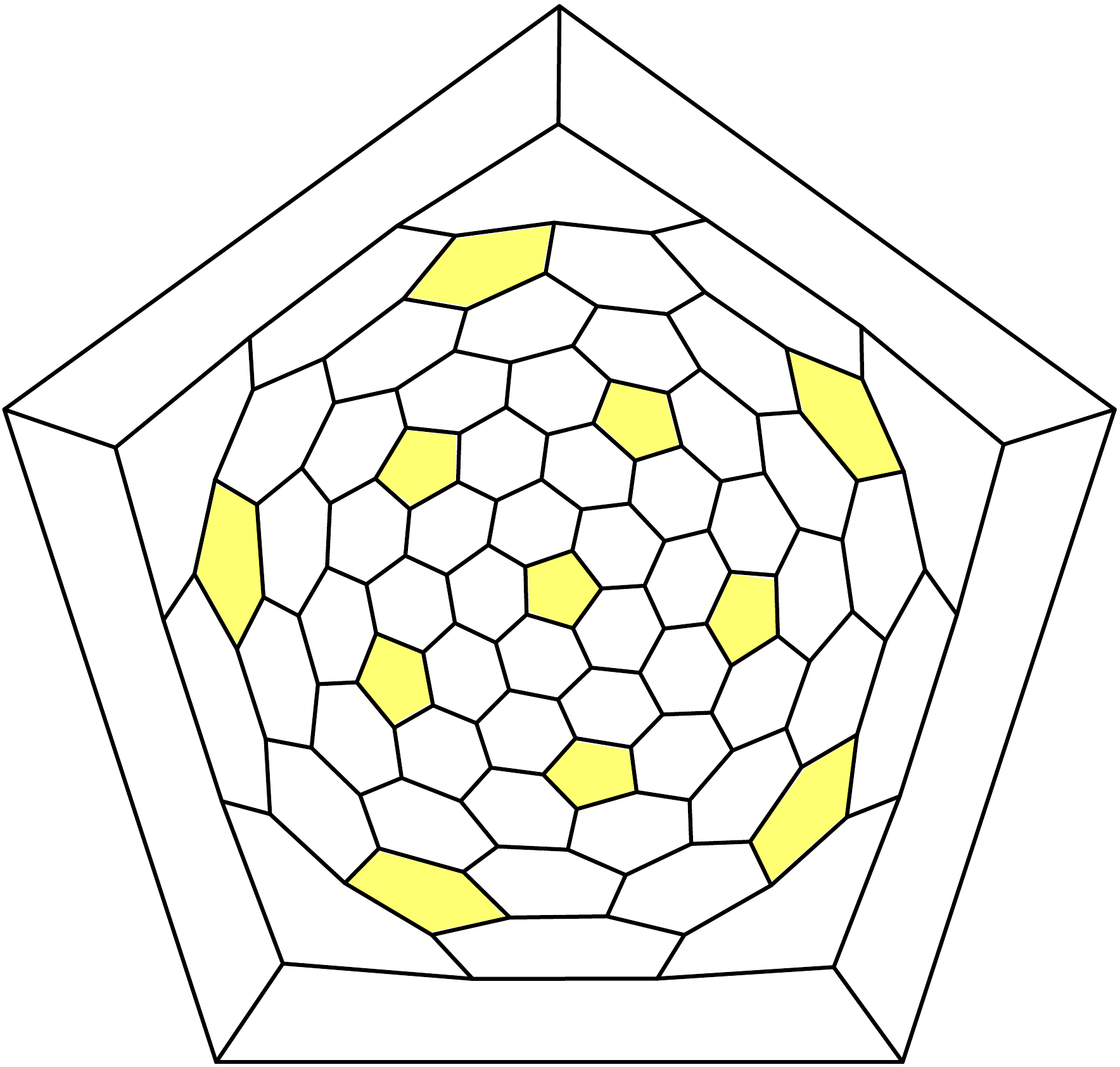}    
    \caption{The icosahedral fullerene with Coxeter coordinates $(2,1)$.  This fullerene and its mirror image are the smallest fullerenes with pentagon separation~3.  They have 140 vertices.}  
    \label{fig:smallest_d=3}
\end{figure}

\begin{figure}[h!t]
    \centering
    \includegraphics[width=0.5\textwidth]{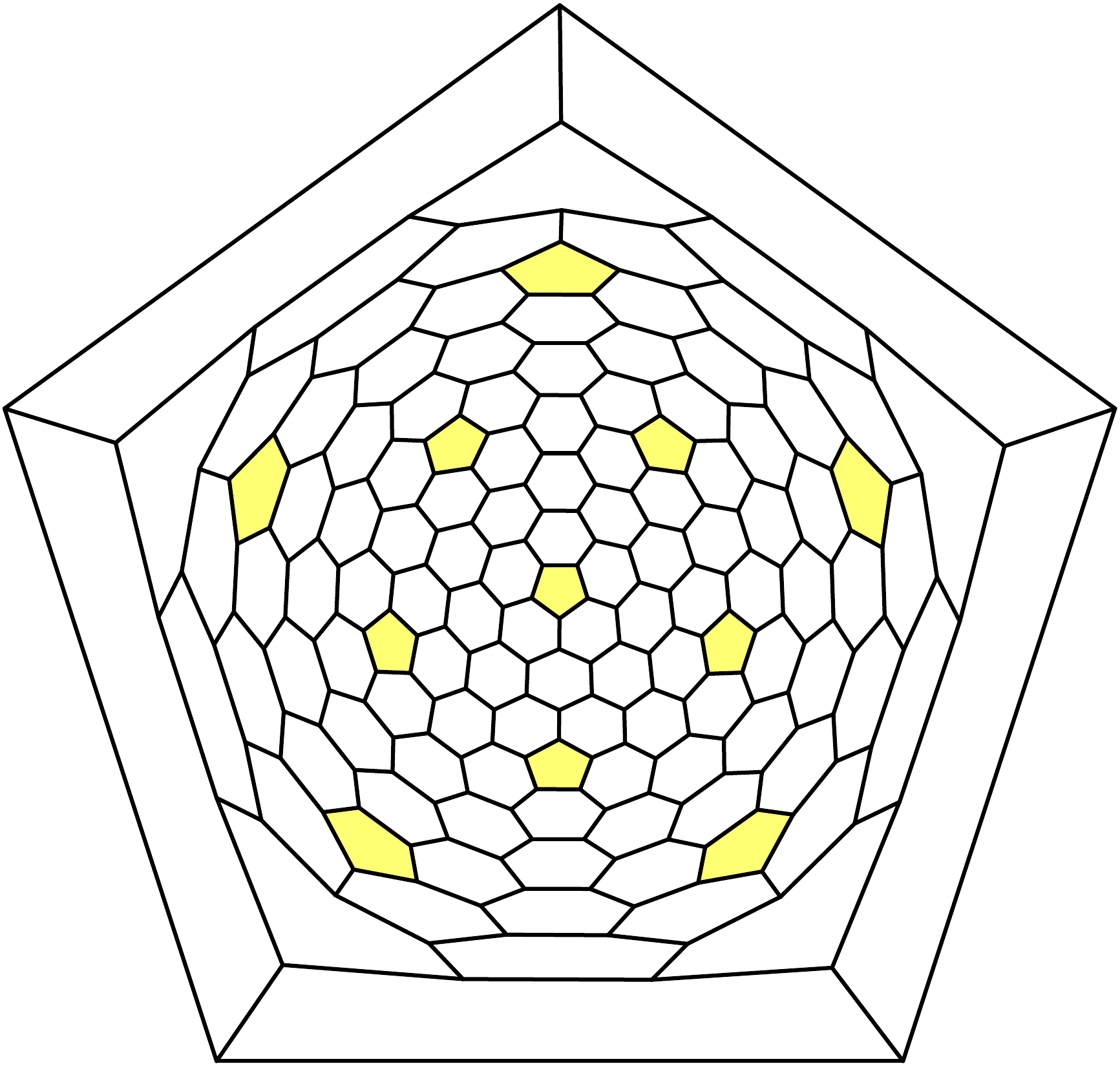}    
    \caption{The icosahedral fullerene with Coxeter coordinates $(2,2)$. This is the smallest fullerene with pentagon separation 4 and has 240 vertices.}
    \label{fig:smallest_d=4}
\end{figure}

\begin{figure}[h!t]
    \centering
    \includegraphics[width=0.5\textwidth]{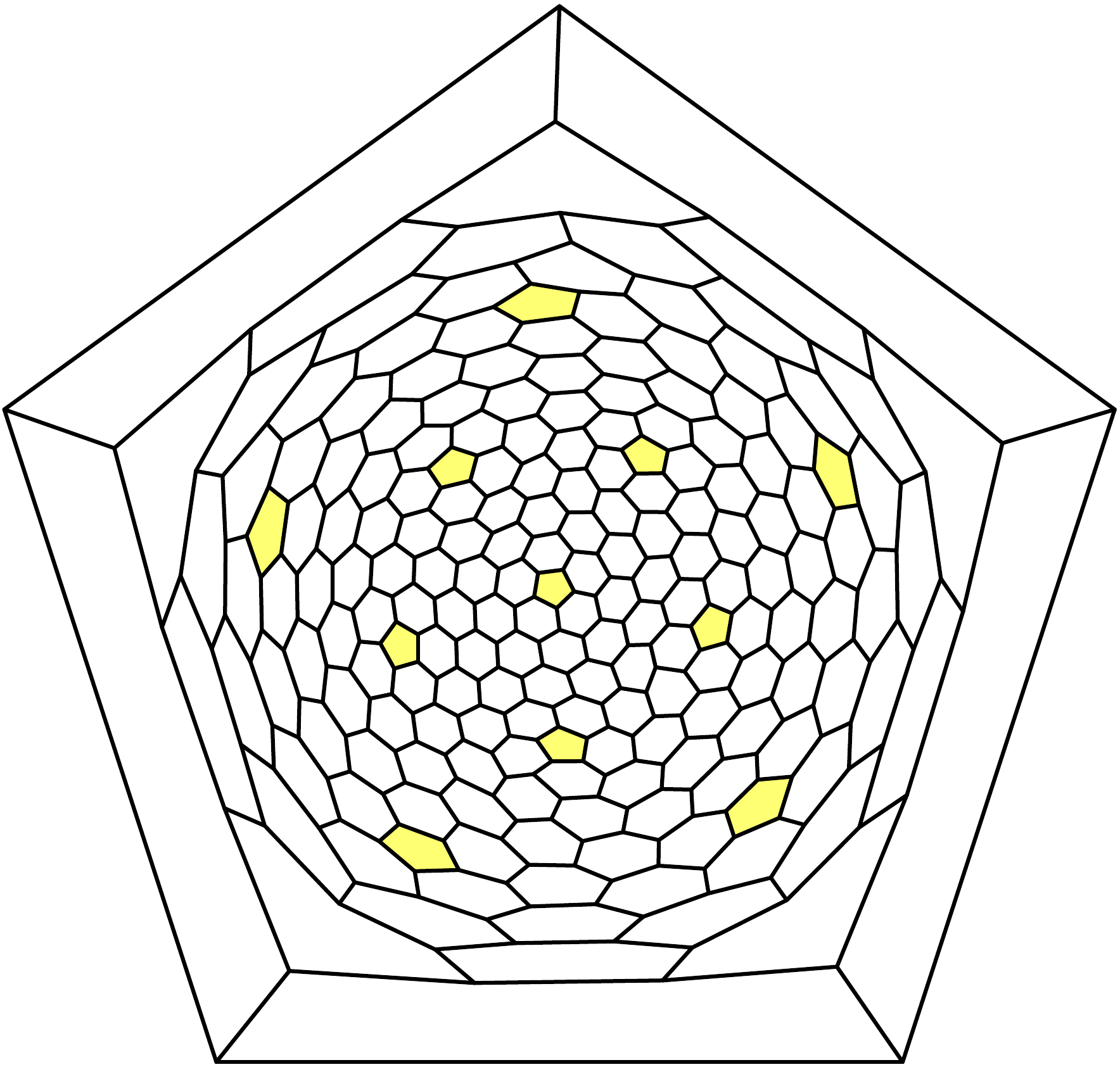}    
    \caption{The icosahedral fullerene with Coxeter coordinates $(3,2)$. This fullerene and its mirror image are the smallest fullerenes with pentagon separation~5.  They have 380 vertices.}  
    \label{fig:smallest_d=5}
\end{figure}

\begin{table}
\centering
{\small 
%\footnotesize
%\begin{center}
\begin{tabular}{| c | c | c | c | c | c  | c |}
\hline 
 nv & nf & fullerenes & IPR fullerenes & pent.\, sep.\,${}\ge3$ & pent.\, sep.\,${}\ge4$ & pent.\, sep.\,${}\ge5$\\
\hline 
20  &  12  &  1  &  0  &  0  &  0  &  0\\
22  &  13  &  0  &  0  &  0  &  0  &  0\\
24  &  14  &  1  &  0  &  0  &  0  &  0\\
26  &  15  &  1  &  0  &  0  &  0  &  0\\
28  &  16  &  2  &  0  &  0  &  0  &  0\\
30  &  17  &  3  &  0  &  0  &  0  &  0\\
32  &  18  &  6  &  0  &  0  &  0  &  0\\
34  &  19  &  6  &  0  &  0  &  0  &  0\\
36  &  20  &  15  &  0  &  0  &  0  &  0\\
38  &  21  &  17  &  0  &  0  &  0  &  0\\
40  &  22  &  40  &  0  &  0  &  0  &  0\\
42  &  23  &  45  &  0  &  0  &  0  &  0\\
44  &  24  &  89  &  0  &  0  &  0  &  0\\
46  &  25  &  116  &  0  &  0  &  0  &  0\\
48  &  26  &  199  &  0  &  0  &  0  &  0\\
50  &  27  &  271  &  0  &  0  &  0  &  0\\
52  &  28  &  437  &  0  &  0  &  0  &  0\\
54  &  29  &  580  &  0  &  0  &  0  &  0\\
56  &  30  &  924  &  0  &  0  &  0  &  0\\
58  &  31  &  1 205  &  0  &  0  &  0  &  0\\
60  &  32  &  1 812  &  1  &  0  &  0  &  0\\
62  &  33  &  2 385  &  0  &  0  &  0  &  0\\
64  &  34  &  3 465  &  0  &  0  &  0  &  0\\
66  &  35  &  4 478  &  0  &  0  &  0  &  0\\
68  &  36  &  6 332  &  0  &  0  &  0  &  0\\
70  &  37  &  8 149  &  1  &  0  &  0  &  0\\
72  &  38  &  11 190  &  1  &  0  &  0  &  0\\
74  &  39  &  14 246  &  1  &  0  &  0  &  0\\
76  &  40  &  19 151  &  2  &  0  &  0  &  0\\
78  &  41  &  24 109  &  5  &  0  &  0  &  0\\
80  &  42  &  31 924  &  7  &  0  &  0  &  0\\
82  &  43  &  39 718  &  9  &  0  &  0  &  0\\
84  &  44  &  51 592  &  24  &  0  &  0  &  0\\
86  &  45  &  63 761  &  19  &  0  &  0  &  0\\
88  &  46  &  81 738  &  35  &  0  &  0  &  0\\
90  &  47  &  99 918  &  46  &  0  &  0  &  0\\
92  &  48  &  126 409  &  86  &  0  &  0  &  0\\
94  &  49  &  153 493  &  134  &  0  &  0  &  0\\
96  &  50  &  191 839  &  187  &  0  &  0  &  0\\
98  &  51  &  231 017  &  259  &  0  &  0  &  0\\
100  &  52  &  285 914  &  450  &  0  &  0  &  0\\
102  &  53  &  341 658  &  616  &  0  &  0  &  0\\
104  &  54  &  419 013  &  823  &  0  &  0  &  0\\
106  &  55  &  497 529  &  1 233  &  0  &  0  &  0\\
108  &  56  &  604 217  &  1 799  &  0  &  0  &  0\\
110  &  57  &  713 319  &  2 355  &  0  &  0  &  0\\
112  &  58  &  860 161  &  3 342  &  0  &  0  &  0\\
114  &  59  &  1 008 444  &  4 468  &  0  &  0  &  0\\
\hline
\end{tabular}
%\end{center}
}
\caption{Number of fullerenes for a given lower bound on the
pentagon separation. nv is the number of vertices and nf is the number of faces.}

\label{table:fuller_counts_1}

\end{table}

\begin{table}
\centering
{\small 
%\footnotesize
%\begin{center}
\begin{tabular}{| c | c | c | c | c | c  | c |}
\hline 
 nv & nf & fullerenes & IPR fullerenes & pent.\, sep.\,${}\ge3$ & pent.\, sep.\,${}\ge4$ & pent.\, sep.\,${}\ge5$\\
\hline 
116  &  60  &  1 207 119  &  6 063  &  0  &  0  &  0\\
118  &  61  &  1 408 553  &  8 148  &  0  &  0  &  0\\
120  &  62  &  1 674 171  &  10 774  &  0  &  0  &  0\\
122  &  63  &  1 942 929  &  13 977  &  0  &  0  &  0\\
124  &  64  &  2 295 721  &  18 769  &  0  &  0  &  0\\
126  &  65  &  2 650 866  &  23 589  &  0  &  0  &  0\\
128  &  66  &  3 114 236  &  30 683  &  0  &  0  &  0\\
130  &  67  &  3 580 637  &  39 393  &  0  &  0  &  0\\
132  &  68  &  4 182 071  &  49 878  &  0  &  0  &  0\\
134  &  69  &  4 787 715  &  62 372  &  0  &  0  &  0\\
136  &  70  &  5 566 949  &  79 362  &  0  &  0  &  0\\
138  &  71  &  6 344 698  &  98 541  &  0  &  0  &  0\\
140  &  72  &  7 341 204  &  121 354  &  1  &  0  &  0\\
142  &  73  &  8 339 033  &  151 201  &  0  &  0  &  0\\
144  &  74  &  9 604 411  &  186 611  &  0  &  0  &  0\\
146  &  75  &  10 867 631  &  225 245  &  0  &  0  &  0\\
148  &  76  &  12 469 092  &  277 930  &  0  &  0  &  0\\
150  &  77  &  14 059 174  &  335 569  &  1  &  0  &  0\\
152  &  78  &  16 066 025  &  404 667  &  2  &  0  &  0\\
154  &  79  &  18 060 979  &  489 646  &  0  &  0  &  0\\
156  &  80  &  20 558 767  &  586 264  &  0  &  0  &  0\\
158  &  81  &  23 037 594  &  697 720  &  0  &  0  &  0\\
160  &  82  &  26 142 839  &  836 497  &  2  &  0  &  0\\
162  &  83  &  29 202 543  &  989 495  &  1  &  0  &  0\\
164  &  84  &  33 022 573  &  1 170 157  &  2  &  0  &  0\\
166  &  85  &  36 798 433  &  1 382 953  &  1  &  0  &  0\\
168  &  86  &  41 478 344  &  1 628 029  &  13  &  0  &  0\\
170  &  87  &  46 088 157  &  1 902 265  &  4  &  0  &  0\\
172  &  88  &  51 809 031  &  2 234 133  &  12  &  0  &  0\\
174  &  89  &  57 417 264  &  2 601 868  &  10  &  0  &  0\\
176  &  90  &  64 353 269  &  3 024 383  &  28  &  0  &  0\\
178  &  91  &  71 163 452  &  3 516 365  &  23  &  0  &  0\\
180  &  92  &  79 538 751  &  4 071 832  &  58  &  0  &  0\\
182  &  93  &  87 738 311  &  4 690 880  &  54  &  0  &  0\\
184  &  94  &  97 841 183  &  5 424 777  &  142  &  0  &  0\\
186  &  95  &  107 679 717  &  6 229 550  &  129  &  0  &  0\\
188  &  96  &  119 761 075  &  7 144 091  &  291  &  0  &  0\\
190  &  97  &  131 561 744  &  8 187 581  &  257  &  0  &  0\\
192  &  98  &  145 976 674  &  9 364 975  &  548  &  0  &  0\\
194  &  99  &  159 999 462  &  10 659 863  &  566  &  0  &  0\\
196  &  100  &  177 175 687  &  12 163 298  &  1 126  &  0  &  0\\
198  &  101  &  193 814 658  &  13 809 901  &  1 072  &  0  &  0\\
200  &  102  &  214 127 742  &  15 655 672  &  1 943  &  0  &  0\\
202  &  103  &  233 846 463  &  17 749 388  &  2 080  &  0  &  0\\
204  &  104  &  257 815 889  &  20 070 486  &  3 682  &  0  &  0\\
206  &  105  &  281 006 325  &  22 606 939  &  3 992  &  0  &  0\\
208  &  106  &  309 273 526  &  25 536 557  &  6 340  &  0  &  0\\
210  &  107  &  336 500 830  &  28 700 677  &  6 737  &  0  &  0\\
\hline
\end{tabular}
%\end{center}
}
\caption{Number of fullerenes for a given lower bound on the
pentagon separation (continued). nv is the number of vertices and nf is the number of faces.}

\label{table:fuller_counts_2}

\end{table}

\begin{table}
\centering
{\small 
%\footnotesize
%\begin{center}
\begin{tabular}{| c | c | c | c | c | c  | c |}
\hline 
 nv & nf & fullerenes & IPR fullerenes & pent.\, sep.\,${}\ge3$ & pent.\, sep.\,${}\ge4$ & pent.\, sep.\,${}\ge5$\\
\hline 
212  &  108  &  369 580 714  &  32 230 861  &  10 513  &  0  &  0\\
214  &  109  &  401 535 955  &  36 173 081  &  12 000  &  0  &  0\\
216  &  110  &  440 216 206  &  40 536 922  &  18 169  &  0  &  0\\
218  &  111  &  477 420 176  &  45 278 722  &  20 019  &  0  &  0\\
220  &  112  &  522 599 564  &  50 651 799  &  28 528  &  0  &  0\\
222  &  113  &  565 900 181  &  56 463 948  &  32 276  &  0  &  0\\
224  &  114  &  618 309 598  &  62 887 775  &  46 534  &  0  &  0\\
226  &  115  &  668 662 698  &  69 995 887  &  52 177  &  0  &  0\\
228  &  116  &  729 414 880  &  77 831 323  &  71 303  &  0  &  0\\
230  &  117  &  787 556 069  &  86 238 206  &  79 915  &  0  &  0\\
232  &  118  &  857 934 016  &  95 758 929  &  109 848  &  0  &  0\\
234  &  119  &  925 042 498  &  105 965 373  &  124 153  &  0  &  0\\
236  &  120  &  1 006 016 526  &  117 166 528  &  164 700  &  0  &  0\\
238  &  121  &  1 083 451 816  &  129 476 607  &  184 404  &  0  &  0\\
240  &  122  &  1 176 632 247  &  142 960 479  &  242 507  &  1  &  0\\
242  &  123  &  1 265 323 971  &  157 402 781  &  273 885  &  0  &  0\\
244  &  124  &  1 372 440 782  &  173 577 766  &  353 997  &  0  &  0\\
246  &  125  &  1 474 111 053  &  190 809 628  &  397 673  &  0  &  0\\
248  &  126  &  1 596 482 232  &  209 715 141  &  507 913  &  0  &  0\\
250  &  127  &  1 712 934 069  &  230 272 559  &  570 053  &  0  &  0\\
252  &  128  &  1 852 762 875  &  252 745 513  &  717 983  &  0  &  0\\
254  &  129  &  1 985 250 572  &  276 599 787  &  805 374  &  0  &  0\\
256  &  130  &  2 144 943 655  &  303 235 792  &  1 007 680  &  0  &  0\\
258  &  131  &  2 295 793 276  &  331 516 984  &  1 127 989  &  0  &  0\\
260  &  132  &  2 477 017 558  &  362 302 637  &  1 392 996  &  2  &  0\\
262  &  133  &  2 648 697 036  &  395 600 325  &  1 550 580  &  0  &  0\\
264  &  134  &  2 854 536 850  &  431 894 257  &  1 905 849  &  0  &  0\\
266  &  135  &  3 048 609 900  &  470 256 444  &  2 124 873  &  1  &  0\\
268  &  136  &  3 282 202 941  &  512 858 451  &  2 592 104  &  1  &  0\\
270  &  137  &  3 501 931 260  &  557 745 670  &  2 868 467  &  2  &  0\\
272  &  138  &  3 765 465 341  &  606 668 511  &  3 461 487  &  1  &  0\\
274  &  139  &  4 014 007 928  &  659 140 287  &  3 847 594  &  0  &  0\\
276  &  140  &  4 311 652 376  &  716 217 922  &  4 621 524  &  1  &  0\\
278  &  141  &  4 591 045 471  &  776 165 188  &  5 112 067  &  2  &  0\\
280  &  142  &  4 926 987 377  &  842 498 881  &  6 079 570  &  4  &  0\\
282  &  143  &  5 241 548 270  &  912 274 540  &  6 726 996  &  1  &  0\\
284  &  144  &  5 618 445 787  &  987 874 095  &  7 971 111  &  10  &  0\\
286  &  145  &  5 972 426 835  &  1 068 507 788  &  8 784 514  &  3  &  0\\
288  &  146  &  6 395 981 131  &  1 156 161 307  &  10 352 546  &  7  &  0\\
290  &  147  &  6 791 769 082  &  1 247 686 189  &  11 385 724  &  9  &  0\\
292  &  148  &  7 267 283 603  &  1 348 832 364  &  13 357 318  &  5  &  0\\
294  &  149  &  7 710 782 991  &  1 454 359 806  &  14 652 198  &  6  &  0\\
296  &  150  &  8 241 719 706  &  1 568 768 524  &  17 102 231  &  24  &  0\\
298  &  151  &  8 738 236 515  &  1 690 214 836  &  18 756 139  &  16  &  0\\
300  &  152  &  9 332 065 811  &  1 821 766 896  &  21 766 152  &  32  &  0\\
302  &  153  &  9 884 604 767  &  1 958 581 588  &  23 815 310  &  36  &  0\\
304  &  154  &  10 548 218 751  &  2 109 271 290  &  27 529 516  &  46  &  0\\
306  &  155  &  11 164 542 762  &  2 266 138 871  &  30 090 574  &  54  &  0\\
\hline
\end{tabular}
%\end{center}
}
\caption{Number of fullerenes for a given lower bound on the
pentagon separation (continued). nv is the number of vertices and nf is the number of faces.}

\label{table:fuller_counts_3}

\end{table}

\begin{table}
\centering
{\small 
%\footnotesize
%\begin{center}
\begin{tabular}{| c | c | c | c | c | c  | c |}
\hline 
 nv & nf & fullerenes & IPR fullerenes & pent.\, sep.\,${}\ge3$ & pent.\, sep.\,${}\ge4$ & pent.\, sep.\,${}\ge5$\\
\hline 
308  &  156  &  11 902 015 724  &  2 435 848 971  &  34 629 672  &  99  &  0\\
310  &  157  &  12 588 998 862  &  2 614 544 391  &  37 770 691  &  93  &  0\\
312  &  158  &  13 410 330 482  &  2 808 510 141  &  43 312 313  &  135  &  0\\
314  &  159  &  14 171 344 797  &  3 009 120 113  &  47 153 778  &  187  &  0\\
316  &  160  &  15 085 164 571  &  3 229 731 630  &  53 899 686  &  211  &  0\\
318  &  161  &  15 930 619 304  &  3 458 148 016  &  58 585 441  &  308  &  0\\
320  &  162  &  16 942 010 457  &  3 704 939 275  &  66 712 070  &  443  &  0\\
322  &  163  &  17 880 232 383  &  3 964 153 268  &  72 395 888  &  535  &  0\\
324  &  164  &  19 002 055 537  &  4 244 706 701  &  82 171 212  &  698  &  0\\
326  &  165  &  20 037 346 408  &  4 533 465 777  &  89 063 353  &  1 026  &  0\\
328  &  166  &  21 280 571 390  &  4 850 870 260  &  100 785 130  &  1 216  &  0\\
330  &  167  &  22 426 253 115  &  5 178 120 469  &  109 068 073  &  1 623  &  0\\
332  &  168  &  23 796 620 378  &  5 531 727 283  &  122 992 213  &  2 489  &  0\\
334  &  169  &  25 063 227 406  &  5 900 369 830  &  132 950 223  &  2 788  &  0\\
336  &  170  &  26 577 912 084  &  6 299 880 577  &  149 523 121  &  3 612  &  0\\
338  &  171  &  27 970 034 826  &  6 709 574 675  &  161 430 830  &  4 744  &  0\\
340  &  172  &  29 642 262 229  &  7 158 963 073  &  181 076 418  &  5 845  &  0\\
342  &  173  &  31 177 474 996  &  7 620 446 934  &  195 124 334  &  7 457  &  0\\
344  &  174  &  33 014 225 318  &  8 118 481 242  &  218 323 289  &  10 591  &  0\\
346  &  175  &  34 705 254 287  &  8 636 262 789  &  235 050 400  &  12 307  &  0\\
348  &  176  &  36 728 266 430  &  9 196 920 285  &  262 381 050  &  15 312  &  0\\
350  &  177  &  38 580 626 759  &  9 768 511 147  &  282 042 413  &  19 574  &  0\\
352  &  178  &  40 806 395 661  &  10 396 040 696  &  314 052 518  &  23 755  &  0\\
354  &  179  &  42 842 199 753  &  11 037 658 075  &  337 229 970  &  29 793  &  0\\
356  &  180  &  45 278 616 586  &  11 730 538 496  &  374 666 300  &  38 688  &  0\\
358  &  181  &  47 513 679 057  &  12 446 446 419  &  401 932 458  &  45 946  &  0\\
360  &  182  &  50 189 039 868  &  13 221 751 502  &  445 482 235  &  55 742  &  0\\
362  &  183  &  52 628 839 448  &  14 010 515 381  &  477 264 068  &  69 970  &  0\\
364  &  184  &  55 562 506 886  &  14 874 753 568  &  528 016 753  &  83 616  &  0\\
366  &  185  &  58 236 270 451  &  15 754 940 959  &  565 045 586  &  100 644  &  0\\
368  &  186  &  61 437 700 788  &  16 705 334 454  &  623 895 236  &  126 048  &  0\\
370  &  187  &  64 363 670 678  &  17 683 643 273  &  666 935 811  &  149 044  &  0\\
372  &  188  &  67 868 149 215  &  18 744 292 915  &  734 907 336  &  179 013  &  0\\
374  &  189  &  71 052 718 441  &  19 816 289 281  &  784 797 263  &  217 673  &  0\\
376  &  190  &  74 884 539 987  &  20 992 425 825  &  863 237 405  &  257 673  &  0\\
378  &  191  &  78 364 039 771  &  22 186 413 139  &  920 935 351  &  302 553  &  0\\
380  &  192  &  82 532 990 559  &  23 475 079 272  &  1 011 152 383  &  367 547  &  1\\
382  &  193  &  86 329 680 991  &  24 795 898 388  &  1 077 679 749  &  434 339  &  0\\
384  &  194  &  90 881 152 117  &  26 227 197 453  &  1 181 149 036  &  507 481  &  0\\
386  &  195  &  95 001 297 565  &  27 670 862 550  &  1 257 630 423  &  611 532  &  0\\
388  &  196  &  99 963 147 805  &  29 254 036 711  &  1 376 400 812  &  707 184  &  0\\
390  &  197  &  104 453 597 992  &  30 852 950 986  &  1 463 926 563  &  820 525  &  0\\
392  &  198  &  109 837 310 021  &  32 581 366 295  &  1 599 524 989  &  982 532  &  0\\
394  &  199  &  114 722 988 623  &  34 345 173 894  &  1 699 970 613  &  1 133 377  &  0\\
396  &  200  &  120 585 261 143  &  36 259 212 641  &  1 854 374 011  &  1 323 509  &  0\\
398  &  201  &  125 873 325 588  &  38 179 777 473  &  1 969 147 856  &  1 546 304  &  0\\
400  &  202  &  132 247 999 328  &  40 286 153 024  &  2 144 985 583  &  1 784 313  &  1\\
\hline
\end{tabular}
%\end{center}
}
\caption{Number of fullerenes for a given lower bound on the
pentagon separation (continued). nv is the number of vertices and nf is the number of faces.}

\label{table:fuller_counts_4}

\end{table}

\begin{flushleft}
\textit{Acknowledgements:}
Jan Goedgebeur is supported by a Postdoctoral Fellowship of the Research Foundation Flanders (FWO).  Brendan McKay is supported by the Australian Research Council. Most computations for this work were carried out using the Stevin Supercomputer Infrastructure at Ghent University. We also would like to thank Gunnar Brinkmann, Patrick Fowler and Jack Graver for useful suggestions.
\end{flushleft}

\bibliographystyle{plain}

%\bibliography{references}

\end{document}